\documentclass[11pt]{article}

\usepackage[margin=1.1in]{geometry}
\usepackage{amsmath,amssymb,amsthm}
\usepackage{array}
\usepackage{hyperref}
\hypersetup{colorlinks=true,linkcolor=blue,urlcolor=blue,citecolor=blue}

\theoremstyle{plain}
\newtheorem{theorem}{Theorem}[section]
\newtheorem{proposition}[theorem]{Proposition}
\newtheorem{lemma}[theorem]{Lemma}
\newtheorem{conjecture}[theorem]{Conjecture}

\theoremstyle{definition}
\newtheorem{remark}[theorem]{Remark}

\newcommand{\C}{\mathbb{C}}
\newcommand{\R}{\mathbb{R}}
\newcommand{\Q}{\mathbb{Q}}
\newcommand{\kk}{\Bbbk}
\newcommand{\JC}{\mathrm{JC}}
\newcommand{\HC}{\mathrm{HC}}
\newcommand{\Hess}{\operatorname{Hess}}
\newcommand{\Jac}{\operatorname{Jac}}
\newcommand{\adj}{\operatorname{adj}}
\providecommand{\coloneqq}{\mathrel{\mathop:}=}

\title{\bf A five-variable counterexample to the Hessian\\
conjecture, and the low-dimensional status of the\\
Jacobian and Hessian conjectures\thanks{The authors are listed in
alphabetical order.}}
\author{Guowu Meng\\[2pt]
\small Department of Mathematics, Hong Kong University of Science and
Technology\\[-2pt]
\small \texttt{mameng@ust.hk}
\and
Liang Yang\\[2pt]
\small Department of Mathematics, Sichuan University, Chengdu 610064,
PR China\\[-2pt]
\small \texttt{malyang@scu.edu.cn}}
\date{Version 4 (arXiv v2) --- July 26, 2026}

\begin{document}
\maketitle

\begin{abstract}
We exhibit an explicit integer polynomial in five variables, of total
degree $14$ and with constant Hessian determinant $128$, whose gradient
is not injective. Consequently its formal Legendre transform is not a
polynomial, and the Hessian conjecture $\HC_5$ is false. The counterexample is obtained from the
six-variable doubling of Alp\"oge's 2026 Jacobian counterexample by a
one-variable \emph{Schur descent}---a partial Legendre transform in a
single variable. Combined with de~Bondt's theorem that
$\HC_n$ holds for $n\le3$, with the elementary doubling and
stabilization bridges relating the Jacobian conjectures $\JC_n$ to the
Hessian conjectures $\HC_n$, and with Alp\"oge's refutation of $\JC_3$,
this decides the Hessian conjecture in every dimension except $n=4$:
$\HC_n$ is true for $n\le3$, false for $n\ge5$, and open only at $n=4$.
Exactly two statements of the two families remain unsettled, $\JC_2$
and $\HC_4$, linked by $\HC_4 \Rightarrow \JC_2$. Along the way we
record, as a warm-up, an explicit six-variable counterexample to
$\HC_6$ with constant Hessian determinant $-4$ and non-injective
gradient. This note adds the five-variable counterexample to, and
updates the status recorded in, the first author's earlier educational
preprint \cite{MengRG2026}.
\end{abstract}

\medskip
\noindent\emph{2020 Mathematics Subject Classification.} 14R15 (primary);
13N15, 26B10 (secondary).

\noindent\emph{Keywords.} Jacobian conjecture, Hessian conjecture,
Keller map, formal Legendre transform, Schur complement.

\section{Introduction}

Throughout, $\kk$ is a field of characteristic zero (for instance $\Q$,
$\R$, or $\C$). For a polynomial
map $F=(F_1,\dots,F_n)\colon \kk^n\to\kk^n$ we write
$\Jac F=\det\bigl(\partial F_i/\partial x_j\bigr)$.

\begin{conjecture}[$\JC_n$, Keller \cite{Keller1939}]
If $F\colon \kk^n\to\kk^n$ is a polynomial map with
$\Jac F\in\kk^{\times}$, then $F$ is invertible, with polynomial
inverse.
\end{conjecture}

\noindent A polynomial map $F$ with $\Jac F\in\kk^{\times}$ (that is,
with $\Jac F$ a nonzero constant) is called a \emph{Keller map}; thus the
conjecture asserts that every Keller map has a polynomial inverse.

The conjecture, open since 1939 in every dimension $n\ge2$ (see
\cite{BCW1982} for background and reductions and \cite{vandenEssen2000}
for a survey), was refuted in July 2026:
Alp\"oge publicly announced \cite{Alpoge2026} an explicit polynomial map
$F\colon\C^3\to\C^3$ with $\Jac F=-2$ possessing a fiber of three points
(the announcement credits the AI system \emph{Fable} with the
construction). With $u=1+x_1x_2$, the map is
\begin{equation}\label{eq:Fmap}
F(x_1,x_2,x_3)=
\begin{pmatrix}
u^3x_3+x_2^2\,u\,(4+3x_1x_2)\\[2pt]
x_2+3x_1u^2x_3+3x_1x_2^2(4+3x_1x_2)\\[2pt]
2x_1-3x_1^2x_2-x_1^3x_3
\end{pmatrix},
\end{equation}
of component degrees $7,6,4$. One checks that $\Jac F\equiv-2$ (a single
$3\times3$ determinant; verified independently in exact arithmetic in
Appendix~\ref{app:computations}), while
\begin{equation}\label{eq:collision}
F\Bigl(0,0,-\tfrac14\Bigr)
=F\Bigl(1,-\tfrac32,\tfrac{13}2\Bigr)
=F\Bigl(-1,\tfrac32,\tfrac{13}2\Bigr)
=\Bigl(-\tfrac14,\,0,\,0\Bigr).
\end{equation}
Since three distinct rational points share the same image, $\JC_3$
fails over every field of characteristic zero. We rely on the
announcement \cite{Alpoge2026} only for the map $F$ and its origin;
the identities \eqref{eq:Fmap}--\eqref{eq:collision}, and every
consequence drawn from them below, are explicit. Although Alp\"oge's map
was found by AI and the Schur descent below with AI assistance, the
resulting refutations of $\JC_3$ and $\HC_5$ can be checked entirely by
hand---and, independently, in exact rational arithmetic
(Appendix~\ref{app:computations})---so the mathematical content of this
paper is independent of \cite{Alpoge2026}.

This paper has two purposes. First and foremost, we construct an
explicit counterexample to the Hessian conjecture $\HC_5$
(Section~\ref{sec:psi5}): a five-variable integer polynomial of total degree
$14$ with constant nonzero Hessian determinant and non-injective
gradient, obtained from Alp\"oge's map \eqref{eq:Fmap} by doubling
followed by a one-variable Schur descent. Second, we record the resulting status of
the two families $\JC_n$ and the closely related Hessian conjectures
$\HC_n$ introduced in \cite{Meng2006}. Together with de~Bondt's theorem
that $\HC_n$ holds for $n\le3$ \cite{deBondt2012} and two standard
bridges, the new counterexample decides $\HC_n$ in every dimension but
$n=4$, collapsing the two dimension-indexed families of open problems to
exactly two low-dimensional statements, $\JC_2$ and $\HC_4$
(Theorem~\ref{thm:status}). An earlier, single-author version recording
only the status portion, intended for educational purposes, is
available as \cite{MengRG2026}; the present paper adds the $\HC_5$
counterexample and updates the status to include the truth of $\HC_2$
and $\HC_3$, established by de~Bondt \cite{deBondt2012} and not recorded
there.

\section{The Hessian conjecture and the two bridges}

Let $\varphi\in\kk[x_1,\dots,x_n]$ satisfy
$\det\Hess\varphi\in\kk^{\times}$, where
$\Hess\varphi=\bigl(\partial^2\varphi/\partial x_i\partial
x_j\bigr)$. Then $\Hess\varphi(0)$ is invertible, so the gradient map
$\nabla\varphi$ admits a formal inverse at $\nabla\varphi(0)$ (after
harmless affine normalization), and one defines the \emph{formal
Legendre transform} $\varphi^{L}$ of $\varphi$ by the classical formula
$\varphi^{L}(p)=\langle p,x(p)\rangle-\varphi(x(p))$, where $x(p)$ is
the formal inverse of $p=\nabla\varphi(x)$; equivalently,
$\nabla\varphi^{L}=(\nabla\varphi)^{-1}$ as formal maps. This is the
classical change of variables $x\leftrightarrow p=\nabla\varphi(x)$
familiar from convex analysis and mechanics, carried out formally near
the base point. In general
$\varphi^{L}$ is a formal power series; see \cite{Meng2006}, where a
tree formula for its coefficients is given.

\begin{conjecture}[$\HC_n$, \cite{Meng2006}]
If $\varphi\in\kk[x_1,\dots,x_n]$ has
$\det\Hess\varphi\in\kk^{\times}$, then $\varphi^{L}$ is a polynomial.
\end{conjecture}

The two conjectures are linked by two bridges. The first is immediate;
the second is the doubling construction of \cite{Meng2006} (see also
the symmetric reduction of de~Bondt--van den Essen
\cite{deBondtvandenEssen2005}, and \cite{Zhao2007} for related
formulations).

\begin{proposition}\label{prop:JCtoHC}
$\JC_n \Longrightarrow \HC_n$.
\end{proposition}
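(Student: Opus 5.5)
Assume $\JC_n$ and let $\varphi\in\kk[x_1,\dots,x_n]$ satisfy $\det\Hess\varphi\in\kk^{\times}$. Put $F=\nabla\varphi\colon\kk^n\to\kk^n$. Then $\Jac F=\det\Hess\varphi$ is a nonzero constant, so $F$ is a Keller map. By $\JC_n$, $F$ has a polynomial inverse $G=(G_1,\dots,G_n)$. The formal inverse of $F$ at the base point is unique, so it agrees with the Taylor expansion of $G$ there. Hence $x(p)=G(p)$ is polynomial. The harmless affine normalization (translating so that the base point and its image are $0$) changes nothing, since translations preserve polynomiality.

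Next, substitute into the defining formula $\varphi^{L}(p)=\langle p,G(p)\rangle-\varphi(G(p))$. Both terms are polynomials in $p$: the first is a sum of products of the coordinates $p_i$ with the polynomials $G_i(p)$, and the second is a composition of polynomials. So $\varphi^{L}$ is a polynomial, which is $\HC_n$.

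For completeness, check that this is consistent with $\nabla\varphi^{L}=(\nabla\varphi)^{-1}$. By the chain rule,
\[
\partial_{p_i}\varphi^{L}
=G_i+\sum_j p_j\,\partial_{p_i}G_j-\sum_j(\partial_j\varphi)(G)\,\partial_{p_i}G_j
=G_i ,
\]
because $(\partial_j\varphi)(G(p))=F_j(G(p))=p_j$. The only point needing care is that the formal object in the definition coincides with the polynomial one. This follows from the uniqueness of formal inverses of a map whose linear part is invertible, and it is not a real obstacle.
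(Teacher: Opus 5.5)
Your proof is correct and follows the same route as the paper's: apply $\JC_n$ to the Keller map $\nabla\varphi$, identify its polynomial inverse $G$ with the formal inverse $x(p)$ by uniqueness, and substitute into $\varphi^{L}(p)=\langle p,G(p)\rangle-\varphi(G(p))$. Your chain-rule check that $\nabla\varphi^{L}=G$ is a correct consistency verification, but the conclusion does not need it.
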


\begin{proof}[Proof (following \cite{Meng2006}).]
If $\det\Hess\varphi\in\kk^{\times}$ then $\nabla\varphi$ is a
polynomial map with constant nonzero Jacobian determinant, so by
$\JC_n$ it has a polynomial inverse $G$. By uniqueness of the formal
inverse at the base point, $G$ coincides with the formal inverse
$x(p)$ of $p=\nabla\varphi(x)$ appearing in the definition of
$\varphi^{L}$. Substituting into the defining formula
$\varphi^{L}(p)=\langle p,x(p)\rangle-\varphi(x(p))$ gives
\[
\varphi^{L}(p)=\langle p,G(p)\rangle-\varphi(G(p)),
\]
a polynomial in $p$. Hence $\varphi^{L}$ is a polynomial.
\end{proof}

\begin{proposition}\label{prop:HCtoJC}
$\HC_{2n} \Longrightarrow \JC_n$.
\end{proposition}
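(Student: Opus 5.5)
The plan is the doubling construction of \cite{Meng2006}. Let $F\colon\kk^n\to\kk^n$ be a Keller map, and introduce new variables $y=(y_1,\dots,y_n)$. Set
\[
\varphi(x,y)=\langle y,F(x)\rangle=\sum_{i=1}^n y_iF_i(x)\in\kk[x,y].
\]
Its Hessian in block form with respect to $(x,y)$ is
\[
\Hess\varphi=\begin{pmatrix} A(x,y) & J(x)^{T}\\ J(x) & 0\end{pmatrix},
\]
where $J=(\partial F_i/\partial x_j)$ and $A=\sum_i y_i\Hess F_i$. Block elimination, or a Laplace expansion along the zero block, gives $\det\Hess\varphi=(-1)^n(\Jac F)^2$, which is a nonzero constant. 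Hence $\HC_{2n}$ applies, and $\varphi^L$ is a polynomial.

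Next I identify the gradient map. We have $\nabla\varphi(x,y)=(J(x)^Ty,\,F(x))$, so its components are $p=J(x)^Ty$ and $q=F(x)$. Since $\varphi^L$ is a polynomial, $\nabla\varphi^L$ is a polynomial map. By definition it is the formal inverse of $\nabla\varphi$ at the base point. I take $x=0$, $y=0$ as base point, after translating so that $F(0)=0$; the affine normalization is harmless since $\Jac$ is unchanged.

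The key step is that $\nabla\varphi^L$ is then a two-sided polynomial inverse of $\nabla\varphi$. The composites $\nabla\varphi\circ\nabla\varphi^L$ and $\nabla\varphi^L\circ\nabla\varphi$ are polynomial maps. They agree with the identity as formal power series at the base points, so they are identically the identity. Therefore $\nabla\varphi$ is a polynomial automorphism of $\kk^{2n}$.

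It remains to extract the inverse of $F$. Write $\nabla\varphi^L(p,q)=(X(p,q),Y(p,q))$ and evaluate at $p=0$. The identity $\nabla\varphi\circ\nabla\varphi^L=\mathrm{id}$ gives $F(X(0,q))=q$ and $J(X(0,q))^TY(0,q)=0$. From $\nabla\varphi^L\circ\nabla\varphi=\mathrm{id}$ at $y=0$ we get $\nabla\varphi(x,0)=(0,F(x))$, and hence $X(0,F(x))=x$. Thus $G(q)=X(0,q)$ is a polynomial two-sided inverse of $F$, which is $\JC_n$.

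I expect the main obstacle to be the justification that the polynomial $\varphi^L$ really produces a global inverse. One must check that the formal inverse computed at the base point is exactly $\nabla\varphi^L$, with the normalization conventions of the definition. One must also check that polynomial identities holding formally near a point hold identically. Both follow from uniqueness of formal inverses and the identity theorem for polynomials over a field of characteristic zero, as in the proof of Proposition~\ref{prop:JCtoHC}.
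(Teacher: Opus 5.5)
Your proof is correct and is essentially the paper's argument. You use the same doubling $\varphi=\langle y,F(x)\rangle$, the same determinant $(-1)^n(\Jac F)^2$, and the same extraction of a polynomial inverse of $F$ from the polynomial inverse of $\nabla\varphi$; your explicit choice $G(q)=X(0,q)$, checked against both composites, is a concrete rendering of the paper's remark that the inverse of the triangular map $\nabla\varphi$ is itself triangular.
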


\begin{proof}[Proof (following \cite{Meng2006}).]
Given $F\colon\kk^n\to\kk^n$ with $\Jac F\in\kk^{\times}$, set
$\varphi(x,y)=\langle y,F(x)\rangle$, a polynomial in
$\kk[x_1,\dots,x_n,y_1,\dots,y_n]$.
Since $\varphi$ is linear in $y$, the Hessian has the block form
\begin{equation}\label{eq:block}
\Hess\varphi=
\begin{pmatrix}
\sum_k y_k\,\Hess F_k(x) & \;(DF(x))^{T}\\[2pt]
DF(x) & 0
\end{pmatrix},
\end{equation}
and swapping the two blocks of $n$ rows (a permutation of sign
$(-1)^{n^2}=(-1)^n$) gives
$\det\Hess\varphi=(-1)^n(\Jac F)^2\in\kk^{\times}$. If $\HC_{2n}$
holds, $\varphi^{L}$ is a polynomial, so $\nabla\varphi$ has a
polynomial inverse. But $\nabla\varphi$ is triangular in $(x,y)$,
\[
\nabla\varphi(x,y)=\bigl((DF(x))^{T}y,\;F(x)\bigr):
\]
the $q$-block $F(x)$ depends on $x$ alone, while for each fixed $x$ the
$p$-block $y\mapsto(DF(x))^{T}y$ is an invertible linear map, since
$\Jac F\in\kk^{\times}$. Its inverse is therefore triangular too, and
recovers $x$ from $q=F(x)$ by a polynomial map---a polynomial inverse of
$F$. Hence $\JC_n$ holds.
\end{proof}

The counterexamples below all refute $\HC_n$ the same way, through the
following elementary observation.

\begin{lemma}\label{lem:noninj}
Let $\varphi$ satisfy $\det\Hess\varphi\in\kk^{\times}$. If the gradient
map $\nabla\varphi$ is not injective, then $\varphi^{L}$ is not a
polynomial.
\end{lemma}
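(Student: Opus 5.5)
We argue by contraposition: assume $\varphi^{L}$ is a polynomial and deduce that $\nabla\varphi$ is injective. Write $P=\nabla\varphi$, a polynomial map $\kk^n\to\kk^n$ with constant nonzero Jacobian $\det\Hess\varphi$. After the affine normalization used in the definition, take the base point to be $0$ with $P(0)=p_0$. By definition $G\coloneqq\nabla\varphi^{L}$ is the formal inverse of $P$ at the base point, so $P(G(p))=p$ holds as an identity of formal power series in $p-p_0$. If $\varphi^{L}$ is a polynomial, then $G$ is a polynomial map, and $P\circ G=\mathrm{id}$ becomes an identity of polynomials. It therefore holds on all of $\kk^n$, and also over the algebraic closure $\bar\kk$.

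The key step is to upgrade the right inverse $G$ to a two-sided inverse. The chain rule applied to $P\circ G=\mathrm{id}$ gives $DP(G(p))\,DG(p)=I$, so $\det DG$ is a nonzero constant and $G$ is itself a Keller map. For the other composition, consider $H=G\circ P$, a polynomial map with $H(0)=G(p_0)=0$ and $DH(0)=DG(p_0)DP(0)=I$. Near $0$, $H$ agrees with $P^{-1}\circ P=\mathrm{id}$ formally, because $G$ is the formal inverse of $P$ at $p_0$ and a formal right inverse with invertible linear part is also a left inverse. So $G\circ P=\mathrm{id}$ holds as formal power series at $0$. Since both sides are polynomials, $G\circ P=\mathrm{id}$ identically.

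Thus $P$ has a polynomial left inverse, so $P(a)=P(b)$ implies $a=G(P(a))=G(P(b))=b$, and $\nabla\varphi$ is injective. This contradicts the hypothesis.

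The one point needing care is the fact that a formal right inverse with invertible linear part is also a formal left inverse. We prove it by the standard argument. The formal maps with invertible linear part, fixing base points, form a group under composition, since each has a formal inverse constructed by solving degree by degree. Therefore $P\circ G=\mathrm{id}$ forces $G=P^{-1}$, and hence $G\circ P=\mathrm{id}$. The remaining issue is matching the base points under the affine normalization, which is routine bookkeeping.
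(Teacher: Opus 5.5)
Your proof is correct and is essentially the paper's argument: the polynomial map $\nabla\varphi^{L}$ satisfies $\nabla\varphi^{L}\circ\nabla\varphi=\mathrm{id}$ as a formal identity at the base point, hence as a polynomial identity on all of $\kk^{n}$, so it is a global left inverse and $\nabla\varphi$ is injective. The paper takes the two-sidedness of the formal inverse for granted, so your group-of-formal-maps justification is a harmless addition; the chain-rule observation that $G$ is Keller and the passage to $\bar\kk$ are never used and can be dropped.
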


\begin{proof}
Were $\varphi^{L}$ a polynomial, $\nabla\varphi^{L}$ would be a
polynomial map, and the formal identity
$\nabla\varphi^{L}\circ\nabla\varphi=\mathrm{id}$, valid near the base
point, would be an identity of polynomials, hence hold on all of
$\kk^{n}$; thus $\nabla\varphi^{L}$ would be a global inverse of
$\nabla\varphi$, impossible for a non-injective map.
\end{proof}

\section{Two stabilization lemmas}

Falsehood propagates upward in dimension, on both sides.

\begin{lemma}\label{lem:JCstab}
If $\JC_n$ is false, then $\JC_{n+1}$ is false.
\end{lemma}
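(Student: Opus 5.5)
The plan is to stabilize by adding a trivial coordinate. Suppose $\JC_n$ is false. Then there is a Keller map $F\colon\kk^n\to\kk^n$ with $\Jac F\in\kk^{\times}$ that has no polynomial inverse. Define
\[
\tilde F\colon\kk^{n+1}\to\kk^{n+1},\qquad \tilde F(x,t)=\bigl(F(x),\,t\bigr).
\]
We will show that $\tilde F$ is a Keller map without a polynomial inverse, which refutes $\JC_{n+1}$.

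First, $D\tilde F$ is block diagonal with blocks $DF(x)$ and $1$. Hence $\Jac\tilde F=\Jac F\in\kk^{\times}$, so $\tilde F$ is a Keller map.

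Second, suppose for contradiction that $\tilde F$ had a polynomial inverse $\tilde G=(G(y,s),H(y,s))$. Then we would set $G_0(y)\coloneqq G(y,0)$ and check that $G_0$ is a polynomial inverse of $F$.
\begin{itemize}
\item From $\tilde G\circ\tilde F=\mathrm{id}$ we get $G(F(x),t)=x$ for all $t$. Specializing to $t=0$ gives $G_0\circ F=\mathrm{id}$.
\item From $\tilde F\circ\tilde G=\mathrm{id}$ we get $F(G(y,s))=y$. Setting $s=0$ gives $F\circ G_0=\mathrm{id}$.
\end{itemize}
Both are identities of polynomials, so $G_0$ is a two-sided polynomial inverse of $F$. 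This contradicts the choice of $F$.

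There is no real obstacle; the only care needed is to record both composition identities, so that the specialization $s=0$ is legitimate. In practice the counterexamples in this paper are non-injective, and that case is even simpler: if $F(a)=F(b)$ with $a\ne b$, then $\tilde F(a,0)=\tilde F(b,0)$, so $\tilde F$ is also non-injective.
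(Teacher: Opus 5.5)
Your proof is correct and is essentially the paper's argument. The paper uses the same stabilization $\widetilde F(x,t)=(F(x),t)$, notes $\Jac\widetilde F=\Jac F$, and simply asserts that $\widetilde F$ is polynomially invertible if and only if $F$ is; your specialization $G_0(y)=G(y,0)$, checked against both composition identities, spells out the direction the paper leaves implicit.
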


\begin{proof}
If $F\colon\kk^n\to\kk^n$ is a Keller map, so is
$\widetilde F(x,t)=(F(x),t)$ in dimension $n+1$, with
$\Jac\widetilde F=\Jac F\in\kk^{\times}$. As $\widetilde F$ acts on $x$
and $t$ separately, it is invertible as a polynomial map if and only if
$F$ is. Hence a counterexample to $\JC_n$ lifts to one for $\JC_{n+1}$.
\end{proof}

\begin{lemma}\label{lem:HCstab}
If $\HC_n$ is false, then $\HC_{n+1}$ is false.
\end{lemma}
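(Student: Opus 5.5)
The plan is to stabilize by adding a square in a new variable. Given $\varphi\in\kk[x_1,\dots,x_n]$ with $\det\Hess\varphi\in\kk^{\times}$ and $\varphi^{L}$ not a polynomial, we set
\[
\widetilde\varphi(x,t)=\varphi(x)+\tfrac12 t^2\in\kk[x_1,\dots,x_n,t].
\]
The Hessian is block diagonal, $\Hess\widetilde\varphi=\operatorname{diag}(\Hess\varphi,1)$, so $\det\Hess\widetilde\varphi=\det\Hess\varphi\in\kk^{\times}$. Thus $\widetilde\varphi$ satisfies the hypothesis of $\HC_{n+1}$, and it remains to show that $\widetilde\varphi^{L}$ is not a polynomial.

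The key step is to compute the formal Legendre transform of $\widetilde\varphi$ directly. The gradient is $\nabla\widetilde\varphi(x,t)=(\nabla\varphi(x),t)$, which acts on $x$ and on $t$ separately. Its formal inverse at the base point $(x_0,t_0)$ is therefore $(p,s)\mapsto(x(p),s)$, where $x(p)$ is the formal inverse of $\nabla\varphi$ at $x_0$; this follows from uniqueness of formal inverses, exactly as in the proof of Proposition~\ref{prop:JCtoHC}. Substituting into the defining formula gives
\[
\widetilde\varphi^{L}(p,s)=\langle p,x(p)\rangle+s\cdot s-\varphi(x(p))-\tfrac12 s^2=\varphi^{L}(p)+\tfrac12 s^2.
\]
If $\widetilde\varphi^{L}$ were a polynomial, then setting $s=0$ would make $\varphi^{L}(p)$ a polynomial, which is a contradiction. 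Hence $\HC_{n+1}$ is false.

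The only step that needs care is the base-point and affine normalization implicit in the definition of $\varphi^{L}$. One must make sure that the choice of base point in the $t$-direction does not change the conclusion. I would handle this by taking base point $t_0=0$. If one prefers to avoid the normalization issue altogether, there is a shortcut when the counterexample to $\HC_n$ is witnessed by non-injectivity, as in all the counterexamples of this paper via Lemma~\ref{lem:noninj}. In that case $\nabla\widetilde\varphi$ is non-injective whenever $\nabla\varphi$ is, because $(x,0)$ and $(x',0)$ collide. Lemma~\ref{lem:noninj} then finishes the argument directly. I would present the general Legendre computation as the main argument and mention the injectivity shortcut as a remark.
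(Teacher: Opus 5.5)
Your proposal is correct and matches the paper's proof essentially step for step: the same stabilization $\widetilde\varphi=\varphi+\tfrac12t^2$, the block-diagonal Hessian $\Hess\varphi\oplus(1)$, and the separated transform $\widetilde\varphi^{\,L}(p,s)=\varphi^{L}(p)+\tfrac12s^2$, from which polynomiality of one is equivalent to polynomiality of the other. Your extra remark about the injectivity shortcut via Lemma~\ref{lem:noninj} is also valid, though the paper does not need it.
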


\begin{proof}
If $\varphi\in\kk[x_1,\dots,x_n]$ has $\det\Hess\varphi\in\kk^{\times}$,
so does $\widetilde\varphi(x,t)=\varphi(x)+\tfrac12 t^2$ in dimension
$n+1$, since $\Hess\widetilde\varphi=\Hess\varphi\oplus(1)$. As
$\widetilde\varphi$ is a sum in the separate variables $x$ and $t$, its
Legendre transform separates likewise,
$\widetilde\varphi^{\,L}(p,s)=\varphi^{L}(p)+\tfrac12 s^2$, so
$\widetilde\varphi^{\,L}$ is a polynomial if and only if $\varphi^{L}$
is. Hence a counterexample to $\HC_n$ lifts to one for $\HC_{n+1}$.
\end{proof}

\section{The status theorem}

\begin{theorem}\label{thm:status}
Over any field of characteristic zero:
\begin{enumerate}
\item[\(\mathrm{(i)}\)] $\JC_1$ and $\HC_1$ are true; moreover $\HC_2$
and $\HC_3$ are true.
\item[\(\mathrm{(ii)}\)] $\JC_n$ is false for every $n\ge3$.
\item[\(\mathrm{(iii)}\)] $\HC_n$ is false for every $n\ge5$.
\item[\(\mathrm{(iv)}\)] Within the two indexed families $\JC_n$ and
$\HC_n$, $n\ge1$, over fields of characteristic zero, these results
leave exactly two statements undecided, $\JC_2$ and $\HC_4$, linked by
\[
\HC_4 \;\Longrightarrow\; \JC_2 .
\]
\end{enumerate}
\end{theorem}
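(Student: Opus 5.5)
The plan is to prove the four parts in order, citing the earlier propositions, lemmas and references.

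For (i): $\JC_1$ holds because a polynomial $F$ in one variable with $F'$ a nonzero constant $c$ is affine, $F(x)=cx+d$, and so has the polynomial inverse $(y-d)/c$. $\HC_1$ then follows from Proposition~\ref{prop:JCtoHC}. Directly, $\varphi''=c$ forces $\varphi$ to be quadratic, and a quadratic has a quadratic Legendre transform. The truth of $\HC_2$ and $\HC_3$ is de~Bondt's theorem \cite{deBondt2012}, which we simply cite.

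For (ii): Alp\"oge's map \eqref{eq:Fmap} satisfies $\Jac F\equiv-2$ and has the three-point fiber \eqref{eq:collision}. It is therefore a non-injective Keller map, and so not invertible, over every field of characteristic zero; this refutes $\JC_3$. Lemma~\ref{lem:JCstab} and induction on $n$ then give the failure of $\JC_n$ for all $n\ge3$.

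For (iii): the key input is the five-variable polynomial of Section~\ref{sec:psi5}. It has constant Hessian determinant $128$ and a gradient that is not injective. By Lemma~\ref{lem:noninj} its Legendre transform is not a polynomial, so $\HC_5$ fails. Lemma~\ref{lem:HCstab} and induction then propagate the failure to all $n\ge5$. If one wants a route that avoids Section~\ref{sec:psi5}, the bridges give only a weaker result: Proposition~\ref{prop:HCtoJC} and (ii) show that $\HC_{2n}$ is false for $n\ge3$, so $\HC_6$ is false, and stabilization yields all $n\ge6$. Thus the case $n=5$ genuinely requires the explicit Schur-descent counterexample.

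The main obstacle is exactly that verification. One must check that the Hessian determinant is the constant $128$ and exhibit two distinct points with the same gradient. I would do this by presenting the polynomial as a partial Legendre transform of the doubled function $\langle y,F(x)\rangle$ in one variable. The constancy of the determinant then follows from the Schur complement identity: the determinant of the Hessian after the partial Legendre transform equals the original determinant divided by the relevant diagonal entry, and that entry must itself be a constant. Non-injectivity is inherited from the collision \eqref{eq:collision} through the triangular structure. In any case, both facts can be confirmed by exact arithmetic, as in Appendix~\ref{app:computations}.

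For (iv): we first account for every other statement. By (i)–(iii), $\JC_1$ is true and $\JC_n$ is false for $n\ge3$, while $\HC_n$ is true for $n\le3$ and false for $n\ge5$. The only statements left are $\JC_2$ and $\HC_4$. The implication $\HC_4\Rightarrow\JC_2$ is Proposition~\ref{prop:HCtoJC} with $n=2$. Finally, neither statement is decided by these results. For $\JC_2$, Proposition~\ref{prop:JCtoHC} would only give $\HC_2$, which is already known to be true. For $\HC_4$, Proposition~\ref{prop:JCtoHC} requires $\JC_4$, which is false, so it gives nothing; and the stabilization lemmas only carry falsity upward in dimension, so they cannot reach $\HC_4$ from any known case.
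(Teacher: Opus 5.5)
Your proposal is correct and follows the paper's proof step for step. The ingredients match: de~Bondt \cite{deBondt2012} for $\HC_2,\HC_3$ (the paper adds that his polynomial gradient inverse makes $\varphi^{L}$ polynomial via the argument of Proposition~\ref{prop:JCtoHC}), Alp\"oge's fiber plus Lemma~\ref{lem:JCstab}, Theorem~\ref{thm:psi5} plus Lemma~\ref{lem:HCstab} with the $\HC_6$ route as backup, and Proposition~\ref{prop:HCtoJC} at $n=2$ for the link. One caution about your side sketch of Theorem~\ref{thm:psi5}: the pivot $\partial_{x_3}^2\langle y,F(x)\rangle$ is identically $0$, so you cannot simply divide by it. The paper first adds $-\tfrac{1}{2\lambda}(x_3-\mu)^2$ and uses $\det\Hess_w(B+sA)\equiv0$ to show that changing the corner entry leaves the determinant at $-4$; also, the collision survives only for the pair $(\pm1,\mp\tfrac32)$ sharing $x_3=\tfrac{13}{2}$, which is where the coefficient $13$ comes from.
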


\begin{proof}
(i) The one-variable cases are classical: $F'\in\kk^{\times}$ forces
$F$ linear, and $\varphi''\in\kk^{\times}$ forces $\varphi$ quadratic,
hence $\varphi^{L}$ quadratic. For $\HC_2$ and $\HC_3$, de~Bondt
\cite{deBondt2012} proved that a polynomial in at most three variables
with constant nonzero Hessian determinant has a polynomial gradient
inverse; the argument in the proof of Proposition~\ref{prop:JCtoHC} then
makes $\varphi^{L}$ a polynomial.

(ii) $\JC_3$ is false by \eqref{eq:Fmap}--\eqref{eq:collision};
Lemma~\ref{lem:JCstab} propagates falsehood to all $n\ge3$.

(iii) $\HC_5$ is false by the explicit five-variable counterexample of
Section~\ref{sec:psi5} (Theorem~\ref{thm:psi5}); Lemma~\ref{lem:HCstab}
propagates falsehood to all $n\ge5$. Independently, the falsity of
$\JC_3$ gives the falsity of $\HC_6$ through
Proposition~\ref{prop:HCtoJC} (contrapositive), recovering the range
$n\ge6$.

(iv) Parts (i)--(iii) decide $\JC_n$ for every $n\ne2$ and $\HC_n$ for
every $n\ne4$, leaving exactly $\JC_2$ and $\HC_4$ undecided; these are
linked by $\HC_4\Rightarrow\JC_2$ (Proposition~\ref{prop:HCtoJC} with
$n=2$).
\end{proof}

\medskip
\noindent
The resulting landscape:

\begin{center}
\renewcommand{\arraystretch}{1.3}
\begin{tabular}{c|ccccc}
$n$ & $1$ & $2$ & $3$ & $4$ & $\ge5$\\
\hline
$\JC_n$ & true & \textbf{open} & false & false & false\\
$\HC_n$ & true & true & true & \textbf{open} & false\\
\end{tabular}

\smallskip
{\small The only bridge between the two surviving open statements is
$\HC_4\Rightarrow\JC_2$.}
\end{center}

\noindent
Both survivors are open: $\JC_2$ is the classical two-dimensional
Jacobian conjecture, unproven since 1939 (see \cite{vandenEssen2000}),
and no result known to us settles $\HC_4$. They are linked only by
$\HC_4\Rightarrow\JC_2$ (Proposition~\ref{prop:HCtoJC} with $n=2$), so
$\HC_4$ is the stronger: proving it settles $\JC_2$, while a
counterexample to $\JC_2$ falsifies it. That both sit in the lowest
dimensions may point to a genuinely low-dimensional phenomenon.

\begin{remark}[Why the descent stops at five variables]
\label{rem:selfterm}
The Schur descent that produces $\HC_5$ from $\HC_6$
(Section~\ref{sec:psi5}) cannot be iterated to reach $\HC_4$: the
mechanism self-terminates. Each descent needs a pivot variable in which
the potential is affine-linear with a degenerate reduced Hessian, a
degeneracy the doubled potential owes to its linearity in the dual
variables. A single descent already spends that linearity---the
descended potential is quadratic in those variables---so no further
pivot of the required type remains. Reaching $\HC_4$ therefore appears
to require genuinely new ideas.
\end{remark}

\section{An explicit counterexample to \texorpdfstring{$\HC_6$}{HC6}}

Applying the doubling of Proposition~\ref{prop:HCtoJC} to the map
\eqref{eq:Fmap} yields a completely explicit counterexample to $\HC_6$
---the case $n=6$ of Theorem~\ref{thm:status}(iii), and hence every
$n\ge6$ by Lemma~\ref{lem:HCstab}. Write $u=1+x_1x_2$ and set
\begin{equation}\label{eq:phi}
\begin{aligned}
\varphi(x_1,x_2,x_3,y_1,y_2,y_3)
={}& y_1\bigl(u^3x_3+x_2^2u(4+3x_1x_2)\bigr)\\
&+ y_2\bigl(x_2+3x_1u^2x_3+3x_1x_2^2(4+3x_1x_2)\bigr)\\
&+ y_3\bigl(2x_1-3x_1^2x_2-x_1^3x_3\bigr),
\end{aligned}
\end{equation}
a polynomial of degree $8$.

\begin{proposition}\label{prop:HC6}
The polynomial \eqref{eq:phi} satisfies
$\det\Hess\varphi\equiv-4$, while $\nabla\varphi$ is not injective;
consequently $\varphi^{L}$ is not a polynomial and $\HC_6$ is false.
\end{proposition}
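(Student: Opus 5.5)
The plan is to recognize \eqref{eq:phi} as exactly the doubled potential $\varphi(x,y)=\langle y,F(x)\rangle$ of Proposition~\ref{prop:HCtoJC}, with $n=3$ and $F$ the map \eqref{eq:Fmap}. Both claims then reduce to facts about $F$ already recorded in the introduction.

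\emph{Step 1 (the determinant).} I would use the block form \eqref{eq:block} of $\Hess\varphi$. Its lower-right $3\times3$ block vanishes because $\varphi$ is linear in $y$. Swapping the two groups of three rows gives
\[
\det\Hess\varphi=(-1)^{3}(\Jac F)^2 .
\]
With $\Jac F\equiv-2$ this is $-(-2)^2=-4$, a nonzero constant. The only genuine computation is therefore the single $3\times3$ determinant $\Jac F\equiv-2$. This is the main (though routine) obstacle. I would expand it directly, treating $u=1+x_1x_2$ as an abbreviation and using $\partial u/\partial x_1=x_2$ and $\partial u/\partial x_2=x_1$. As a cross-check I would cite the exact-arithmetic verification in Appendix~\ref{app:computations}.

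\emph{Step 2 (non-injectivity).} Recall that
\[
\nabla\varphi(x,y)=\bigl((DF(x))^{T}y,\;F(x)\bigr).
\]
Take two of the points in \eqref{eq:collision}, say $a=(0,0,-\tfrac14)$ and $b=(1,-\tfrac32,\tfrac{13}2)$, which satisfy $F(a)=F(b)$. The simplest choice is $y=0$: then $\nabla\varphi(a,0)=(0,F(a))=(0,F(b))=\nabla\varphi(b,0)$. Since $(a,0)\ne(b,0)$, the gradient map is not injective. No information about $DF(a)$ or $DF(b)$ is needed.

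\emph{Step 3 (conclusion).} Lemma~\ref{lem:noninj} applies because $\det\Hess\varphi=-4\in\kk^\times$ and $\nabla\varphi$ is non-injective. It shows that $\varphi^{L}$ is not a polynomial, so $\varphi$ is an explicit counterexample to $\HC_6$. Finally, I would note that $\varphi$ has degree $8$: the $y_1$-coefficient $u^3x_3$ has degree $7$, and multiplying by $y_1$ gives total degree $8$. This matches the description preceding the statement.
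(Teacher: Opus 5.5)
Your proposal is correct and follows the paper's proof: the block form \eqref{eq:block} gives $\det\Hess\varphi=(-1)^3(\Jac F)^2=-4$, and a collision of $F$ lifts to $\nabla\varphi$ at $y=0$, after which Lemma~\ref{lem:noninj} finishes the argument. The only cosmetic difference is that you use two of the three colliding points of \eqref{eq:collision}, whereas the paper uses all three; two already suffice for non-injectivity.
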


\begin{proof}
Here $\varphi=\langle y,F(x)\rangle$ is the doubling of
Proposition~\ref{prop:HCtoJC} applied to $F$, and that proof supplies
both claims. First, by the block form \eqref{eq:block},
$\det\Hess\varphi=(-1)^3(\Jac F)^2=-(-2)^2=-4$. Second,
$\nabla\varphi(x,y)=\bigl((DF(x))^{T}y,\,F(x)\bigr)$, so a collision of
$F$ lifts at once to one of $\nabla\varphi$: for the three colliding
points $P=(0,0,-\tfrac14),(1,-\tfrac32,\tfrac{13}2),
(-1,\tfrac32,\tfrac{13}2)$ of \eqref{eq:collision}, the distinct points
$(P,0)\in\kk^{6}$ share the gradient
\[
\nabla\varphi(P,0)=\bigl(0,\,F(P)\bigr)=\Bigl(0,0,0,-\tfrac14,0,0\Bigr).
\]
Hence $\nabla\varphi$ is not injective, and $\varphi^{L}$ is not a
polynomial by Lemma~\ref{lem:noninj}. (Only non-injectivity of
$\nabla\varphi$ is used; we make no claim about its generic fiber
cardinality.)
\end{proof}

\section{An explicit counterexample to \texorpdfstring{$\HC_5$}{HC5}}\label{sec:psi5}

The six-variable counterexample $\varphi$ of \eqref{eq:phi} has one variable
more than we would like. Every component of the map \eqref{eq:Fmap} is
affine-linear in $x_3$, so we may eliminate $x_3$ and obtain a
genuine five-variable counterexample. The elimination---a one-variable
\emph{Schur descent}, found by the second author---is
carried out, and shown to work in general, in Appendix~\ref{app:schur}. Here we simply write down the resulting
polynomial and check by hand the two properties that make it a
counterexample.

Collecting $\varphi=x_3A+B$ according to its dependence on $x_3$, and
writing $u=1+x_1x_2$ as before, gives the two polynomials
\begin{align*}
A&=y_1u^3+3x_1y_2u^2-x_1^3y_3,\\
B&=y_1x_2^2u(4+3x_1x_2)
+y_2\bigl(x_2+3x_1x_2^2(4+3x_1x_2)\bigr)
+y_3\bigl(2x_1-3x_1^2x_2\bigr)
\end{align*}
in the five variables $x_1,x_2,y_1,y_2,y_3$. The polynomial of the next
theorem is built from $A$ and $B$ alone; Appendix~\ref{app:schur}
explains how it arises and why this combination transports the collision.

\begin{theorem}\label{thm:psi5}
The five-variable polynomial
\[
\Psi\;=\;A^2+13A+2B\;\in\;
\mathbb{Z}[x_1,x_2,y_1,y_2,y_3]
\]
has total degree $14$ and satisfies
\[
\det\Hess\Psi\;\equiv\;128 .
\]
Moreover the two distinct rational points
\[
P_{\pm}=\bigl(\pm1,\;\mp\tfrac32,\;0,\,0,\,0\bigr)
\]
satisfy
\[
\nabla\Psi(P_+)\;=\;
\nabla\Psi(P_-)\;=\;
\bigl(0,\,0,\,-\tfrac12,\,0,\,0\bigr).
\]
Consequently $\Psi^{L}$ is not a polynomial, and $\HC_5$ is false.
\end{theorem}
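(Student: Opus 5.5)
The proof has three parts: the Hessian determinant by a Schur-complement computation that reuses Proposition~\ref{prop:HC6}, the collision by reducing $\nabla\Psi$ to $\nabla\varphi$ at $x_3=13/2$, and the degree count. Lemma~\ref{lem:noninj} then finishes. Write $w=(x_1,x_2,y_1,y_2,y_3)$, so that $\varphi=x_3A(w)+B(w)$. The key observation is an explicit formula for $\Psi$ in terms of $\varphi$. Set
\[
c(w)=A(w)+\tfrac{13}{2}.
\]
Then
\[
\Hess\Psi=(2A+13)\Hess A+2\Hess B+2\,\nabla A\,\nabla A^{T}=2\bigl(M+aa^{T}\bigr),
\]
where $a=\nabla_w A$ and $M=c\,\Hess A+\Hess B$. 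Thus $M$ is the $5\times5$ block $\Hess_w\varphi$ evaluated pointwise at $x_3=c(w)$. Similarly
\[
\nabla\Psi=(2A+13)\nabla A+2\nabla B=2\,\nabla_w\varphi\big|_{x_3=c(w)}.
\]

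\emph{Determinant.} Since $\partial_{x_3}^2\varphi=0$ and $\partial_{x_3}\nabla_w\varphi=\nabla_wA=a$, the full $6\times6$ Hessian of $\varphi$ has the bordered form
\[
\begin{pmatrix}\Hess_w\varphi & a\\ a^{T} & 0\end{pmatrix}.
\]
Its determinant is $-a^{T}\adj(\Hess_w\varphi)\,a$. By Proposition~\ref{prop:HC6} this equals $-4$ identically in $(w,x_3)$, hence also after substituting $x_3=c(w)$. By the matrix determinant lemma,
\[
\det\Hess\Psi=2^5\bigl(\det M+a^{T}\adj(M)\,a\bigr)=32\,(\det M+4).
\]
It therefore suffices to show $\det\Hess_w\varphi\equiv0$; this is the degeneracy noted in Remark~\ref{rem:selfterm}. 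Because $\varphi$ is linear in $y$, $\Hess_w\varphi$ has a zero $3\times3$ block in the $y$-variables. Its off-diagonal block is the $3\times2$ matrix $\partial F/\partial(x_1,x_2)$. Hence the three $y$-rows span a space of rank at most $2$, and the $5\times5$ matrix has rank at most $4$. So $\det M=0$, which gives $\det\Hess\Psi=128$.

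\emph{Collision.} At $P_\pm$ we have $y=0$, so $A(P_\pm)=0$ and $c=13/2$. Then
\[
\nabla\Psi(P_\pm)=2\,\nabla_w\varphi\bigl(\pm1,\mp\tfrac32,\tfrac{13}{2},0\bigr).
\]
By the doubling formula $\nabla\varphi(x,y)=\bigl((DF)^{T}y,F(x)\bigr)$ with $y=0$, this is $2\,(0,0,F(\pm1,\mp\tfrac32,\tfrac{13}{2}))$. By \eqref{eq:collision} it equals $2\,(0,0,-\tfrac14,0,0)=(0,0,-\tfrac12,0,0)$. The points $P_\pm$ are distinct, so Lemma~\ref{lem:noninj} shows that $\Psi^L$ is not a polynomial, and $\HC_5$ fails.

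\emph{Degree and integrality.} Both $A$ and $B$ have integer coefficients, so $\Psi\in\mathbb{Z}[w]$. The top-degree part of $A$ is $y_1x_1^3x_2^3$, of degree $7$, coming from $y_1u^3$. Hence $A^2$ contains $y_1^2x_1^6x_2^6$ of degree $14$. Every other term of $\Psi$ has degree at most $7$, so nothing cancels and $\deg\Psi=14$.

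The main point to get right is the determinant step. The argument relies on two things. First, the identity $\det\Hess\varphi\equiv-4$ holds for every value of $x_3$, so the pointwise substitution $x_3=c(w)$ is legitimate. Second, the extra term $2\nabla A\nabla A^T$ produced by differentiating $A^2$ is exactly the rank-one correction that makes the determinant lemma apply. The vanishing of $\det\Hess_w\varphi$ is what turns $32(\det M+4)$ into the constant $128$.
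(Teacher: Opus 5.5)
Your proof is correct and takes essentially the same route as the paper. It uses the bordered Hessian and Cauchy expansion to get $g^{T}\adj(M)g=4$ for every value of $x_3$, derives $\det M\equiv0$ from the $y$-rows being supported in the two $x$-columns, applies the rank-one determinant identity at $x_3=A+\tfrac{13}{2}$ with the factor $2^5$, and obtains the collision from $\nabla\Psi=2\nabla_w\varphi|_{x_3=A+13/2}$ at $y=0$. Your explicit leading-term check via $y_1^2x_1^6x_2^6$ for the total degree is a small detail the paper states without argument.
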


\begin{proof}
\emph{Constant Hessian determinant.} The doubled potential
$\varphi=x_3A+B$ of \eqref{eq:phi} is affine-linear in $x_3$, so its
Hessian in $(x_3,w)$, with $w=(x_1,x_2,y_1,y_2,y_3)$, is the bordered
matrix $\left(\begin{smallmatrix}0&g^{T}\\ g&M\end{smallmatrix}\right)$,
where $g=\nabla_wA$ and $M=M(s,w)$ with
$M(s,w)\coloneqq\Hess_w(B+sA)$, at $s=x_3$. Since
$B+sA$ is linear in $(y_1,y_2,y_3)$, the three $y$-rows of $M$ are
supported in the two $x$-columns, so $\det M(s,w)\equiv0$; and expanding
the border gives $-g^{T}\adj(M(s,w))\,g=\det\Hess\varphi=-4$,
independently of $s$ (Proposition~\ref{prop:HC6}); here $\adj$ denotes the
adjugate (classical adjoint) matrix, characterized by
$\adj(M)\,M=(\det M)\,I$. By the rank-one
determinant identity
$\det(M+\lambda gg^{T})=\det M+\lambda\,g^{T}\adj(M)g$
(immediate on expanding in the columns, since $gg^{T}$ has rank one), applied to
$\Hess_w\bigl(B+\tfrac12A^2+\tfrac{13}2A\bigr)=M\bigl(\tfrac{13}2+A,w\bigr)+gg^{T}$,
\[
\det\Hess_w\!\Bigl(B+\tfrac12A^2+\tfrac{13}2A\Bigr)
=0+g^{T}\adj(M)g=4 .
\]
As $\Psi=2\bigl(B+\tfrac12A^2+\tfrac{13}2A\bigr)$ and scaling a
potential in five variables by $2$ multiplies its Hessian determinant by
$2^{5}$, we get $\det\Hess\Psi=2^{5}\cdot4=128$. The total degree is
$14$, and expanding $A^2+13A+2B$ yields $42$ monomials (counted after
expansion; the count is confirmed in Appendix~\ref{app:computations}).

\emph{Gradient collision.} Write $F=F_0+x_3F_1$, so $A=\langle y,F_1\rangle$
and $B=\langle y,F_0\rangle$. Differentiating gives, everywhere,
$\nabla\Psi=(2A+13)\nabla A+2\nabla B$. Since $A$ and $B$ are
homogeneous of degree one in $(y_1,y_2,y_3)$, so are their
$x$-derivatives; hence $A$, $B$, and their $x$-derivatives all vanish on
$\{y=0\}$. The $x$-block of $\nabla\Psi$ therefore vanishes on
$\{y=0\}$, while---using
$A=0$ and $\nabla_yA=F_1$, $\nabla_yB=F_0$---its $y$-block is
$13F_1+2F_0=2\bigl(F_0+\tfrac{13}2F_1\bigr)
=2F(\,\cdot\,,\tfrac{13}2)$. Thus
\[
\nabla\Psi(P_\pm)
=\bigl(0,0,\,2F(\pm1,\mp\tfrac32,\tfrac{13}2)\bigr)
=(0,0,-\tfrac12,0,0),
\]
the two values agreeing because
$F(1,-\tfrac32,\tfrac{13}2)=F(-1,\tfrac32,\tfrac{13}2)=(-\tfrac14,0,0)$
by \eqref{eq:collision}, while $P_+\ne P_-$.

\emph{Conclusion.} The distinct points
$P_+\ne P_-$ have equal gradient, so
$\nabla\Psi$ is not injective; since
$\det\Hess\Psi\in\kk^{\times}$, Lemma~\ref{lem:noninj} shows
$\Psi^{L}$ is not a polynomial and $\HC_5$ is false. (As in
Proposition~\ref{prop:HC6}, only non-injectivity of
$\nabla\Psi$ is used; we make no claim about the generic fiber
cardinality.)
\end{proof}

Because the theorem refers to $\Psi$ alone, the refutation of
$\HC_5$ is self-contained and elementary to verify by hand: the proof
reduces both properties of $\Psi$ to two facts about Alp\"oge's
map $F$---that $\Jac F$ is a nonzero constant, and that $F$ maps
$(\pm1,\mp\tfrac32,\tfrac{13}2)$ to a common image. The first is a single
$3\times3$ determinant (a conceptual explanation of which now also appears
in the discussion documented in \cite{Alpoge2026}), the second an
immediate substitution; no computer algebra is needed.

Alp\"oge's map has a three-point fiber \eqref{eq:collision}, yet
$\Psi$ exhibits only a two-point gradient collision. This is not
an oversight: the descent transfers exactly those collisions whose source
points share the pivot coordinate $x_3$
(Lemma~\ref{lem:collision-transfer}). Only the pair
$(\pm1,\mp\tfrac32,\tfrac{13}2)$ lies over a common $x_3=t_0=\tfrac{13}2$
and so is carried down to $P_\pm$; the third preimage
$(0,0,-\tfrac14)$ sits alone over $x_3=-\tfrac14$, with no partner to
collide with after the pivot is eliminated, and is simply not seen by the
five-variable potential. Two colliding points already refute $\HC_5$. The descent is treated in
general in Appendix~\ref{app:schur}, where it turns \emph{any} collision
of a Keller map over a common last coordinate into a constant-Hessian
counterexample, and where the coefficients $13$ and $2$ are seen to be
one choice among a two-parameter family.

\section*{Acknowledgments}
\addcontentsline{toc}{section}{Acknowledgments}
\noindent
The first author thanks Jianshu Li, Qiang Du, and Yu Zhao for informing
him of Alp\"oge's counterexample and for helpful discussions. The Schur
descent of Section~\ref{sec:psi5} was found by the second author; large
language model (LLM) assistants (OpenAI's ChatGPT Pro and Anthropic's
Claude) served as auxiliary tools---ChatGPT Pro in the search for the
descent, both in scripting the exact-arithmetic verifications. All
statements, computations, and proofs are the sole responsibility of the
authors, who have verified them by hand. The five-variable
counterexample is, to our knowledge, new.

\appendix
\section{The Schur-descent bridge in general odd dimensions}
\label{app:schur}

The five-variable counterexample of Section~\ref{sec:psi5} was produced from the
six-variable doubling by a mechanism that is not special to the displayed
coefficients. We record here the two elementary lemmas underlying the
construction and the general statement they yield.

Throughout, $\kk$ is a field of characteristic zero. Suppose a potential
in the scalar variable $t$ and the block $w=(w_1,\dots,w_m)$ is
affine-linear in $t$,
\begin{equation}
\Phi(t,w)=t\,A(w)+B(w).
\label{eq:app-affine-t}
\end{equation}
Write $g=\nabla_w A$ and, for an auxiliary indeterminate $s$,
\[
M(s,w)\coloneqq\Hess_w B+s\,\Hess_w A,
\]
a matrix affine-linear in $s$ whose entries are polynomials in $s$ and
$w$; we may therefore substitute for $s$ any polynomial expression---below
the variable $t$, and later a function of $w$. With $s=t$ the lower-right
block is $\Hess_w\Phi=\Hess_w(B+tA)=M(t,w)$, so the full Hessian is the
bordered matrix
\begin{equation}
\Hess_{t,w}\Phi=
\begin{pmatrix}
0 & g^{T}\\
g & M(t,w)
\end{pmatrix}.
\label{eq:app-bordered}
\end{equation}

\begin{lemma}[Schur descent]
\label{lem:schur}
Assume that in the situation \eqref{eq:app-affine-t}--\eqref{eq:app-bordered},
\[
\det\Hess_{t,w}\Phi=c\in\kk^{\times},
\qquad
\det M(s,w)\equiv0 ,
\]
the latter identically in $s$ and $w$.
For any $\lambda\in\kk^{\times}$ and $\mu\in\kk$ define the
$m$-variable polynomial
\begin{equation}
\psi_{\lambda,\mu}(w)=B(w)+\tfrac{\lambda}{2}A(w)^{2}+\mu A(w).
\label{eq:app-psi}
\end{equation}
Then
\begin{equation}
\det\Hess\psi_{\lambda,\mu}=-\lambda c .
\label{eq:app-descent}
\end{equation}
\end{lemma}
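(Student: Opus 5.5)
The plan is to generalize the computation already carried out in the proof of Theorem~\ref{thm:psi5}. There are three ingredients: the bordered determinant, the rank-one update identity, and the observation that $\Hess\psi_{\lambda,\mu}$ is a rank-one perturbation of $M$ evaluated at a polynomial value of $s$.

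\emph{Step 1 (the border expansion).} Expand the bordered matrix \eqref{eq:app-bordered} along its first row and first column. Because the corner entry is $0$, the standard formula for a bordered determinant gives
\[
\det\begin{pmatrix}0&g^{T}\\ g&M\end{pmatrix}=-\,g^{T}\adj(M)\,g .
\]
One way to see this is to compare the coefficient of $\lambda$ in $\det\bigl(\begin{smallmatrix}\lambda&g^{T}\\ g&M\end{smallmatrix}\bigr)=\lambda\det M-g^{T}\adj(M)g$. This identity holds for any square matrix $M$ with polynomial entries. Apply it with $M=M(s,w)$, treating $s$ as an indeterminate. The left side is then a polynomial in $s,w$ that equals $c$ when $s=t$. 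Since $t$ is just the name of the variable, this means $g(w)^{T}\adj(M(s,w))\,g(w)=-c$ identically as a polynomial in $s$ and $w$. Here it matters that the hypothesis is stated with $t$ free: $M(t,w)$ is literally $M(s,w)$ with $s$ renamed.

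\emph{Step 2 (the Hessian of $\psi_{\lambda,\mu}$).} Compute directly:
\[
\Hess\psi_{\lambda,\mu}=\Hess B+(\lambda A+\mu)\Hess A+\lambda\,gg^{T}=M(\lambda A+\mu,w)+\lambda gg^{T}.
\]
This uses $\Hess(\tfrac{\lambda}{2}A^2)=\lambda A\Hess A+\lambda gg^{T}$.

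\emph{Step 3 (the rank-one update).} Use the identity $\det(N+\lambda gg^{T})=\det N+\lambda\,g^{T}\adj(N)g$. This holds because the determinant is multilinear in columns and every $2\times2$ minor of $gg^{T}$ vanishes, so only terms with at most one replaced column survive. Alternatively, it is the matrix determinant lemma extended to singular $N$ by a polynomial-identity argument. Take $N=M(\lambda A(w)+\mu,w)$, which is obtained by substituting $s=\lambda A+\mu$ into polynomial identities in $s$. Then $\det N=0$ by the second hypothesis, and $g^{T}\adj(N)g=-c$ by Step 1. Therefore $\det\Hess\psi_{\lambda,\mu}=0+\lambda(-c)=-\lambda c$, which is \eqref{eq:app-descent}.

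The main point requiring care is Step 1's promotion of the hypothesis from $s=t$ to an identity in an independent indeterminate $s$. This is what permits the substitution $s=\lambda A+\mu$, which depends on $w$. It works because $\Hess_{t,w}\Phi$ involves $t$ only through $M(t,w)$; $g$ is independent of $t$ since $\Phi$ is affine in $t$. Thus the determinant is exactly the polynomial $-g^{T}\adj(M(s,w))g$ evaluated at $s=t$, and a polynomial in $(t,w)$ that is constant stays constant after renaming $t$ to $s$. The remaining steps are routine.
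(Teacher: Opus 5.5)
Your proof is correct and follows the paper's argument; it is the general form of the computation in the proof of Theorem~\ref{thm:psi5}: $\Hess\psi_{\lambda,\mu}=M(\mu+\lambda A,w)+\lambda gg^{T}$, then the rank-one identity with $\det M\equiv0$ and $g^{T}\adj(M)g\equiv-c$. The appendix proof of the lemma packages the same rank-one update as the Schur complement of the pivot $-1/\lambda$ in the Hessian of $\Phi-\tfrac{1}{2\lambda}(t-\mu)^{2}$, plus corner-independence of the bordered determinant when $\det M\equiv0$, and your explicit promotion of the hypothesis to an identity in an independent $s$ before substituting $s=\mu+\lambda A(w)$ is what justifies the appendix's tacit evaluation at $t=t^{*}(w)$ (one small slip: in Step~1 the corner-zero formula is the constant term in the corner variable, not the coefficient of $\lambda$).
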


\begin{proof}
The polynomial $\psi_{\lambda,\mu}$ is a one-variable partial Legendre
transform of $\Phi$ in $t$. Repair $\Phi$ by a quadratic,
\[
\widehat\Phi(t,w)=\Phi(t,w)-\tfrac{1}{2\lambda}(t-\mu)^{2},
\]
so that $\partial_t\widehat\Phi=A(w)-\tfrac1\lambda(t-\mu)$ vanishes
exactly at $t^{*}=\mu+\lambda A(w)$, where $\widehat\Phi$ attains the
critical value $\psi_{\lambda,\mu}(w)$. The pivot
$\partial_t^2\widehat\Phi=-1/\lambda$ is invertible, so eliminating $t$
leaves on the remaining variables the \emph{Schur complement} of that
pivot in $\Hess_{t,w}\widehat\Phi$---namely the Hessian
$M(\mu+\lambda A,w)+\lambda gg^{T}$ of the critical value
$\psi_{\lambda,\mu}$---and the block-determinant identity reads
\[
\det\Hess_{t,w}\widehat\Phi
=-\tfrac1\lambda\,\det\Hess\psi_{\lambda,\mu}.
\]
It remains to compute the left-hand side. Because $\Phi$ is affine in
$t$, the repair alters only the corner of the bordered Hessian
\eqref{eq:app-bordered}, replacing $0$ by $-1/\lambda$. By the Cauchy
bordered-determinant expansion
\begin{equation}
\det\begin{pmatrix} a & g^{T}\\ g & M\end{pmatrix}
= a\,\det M - g^{T}\adj(M)\,g
\label{eq:app-cauchy}
\end{equation}
(valid for any scalar $a$; expand the border by multilinearity along its
first row and column), a bordered determinant with $\det M\equiv0$
equals $-g^{T}\adj(M)\,g$, \emph{independently of the corner $a$}. Hence
the repair leaves the determinant unchanged,
$\det\Hess_{t,w}\widehat\Phi=\det\Hess_{t,w}\Phi=c$, and the displayed
Schur identity gives $\det\Hess\psi_{\lambda,\mu}=-\lambda c$.
\end{proof}

\begin{lemma}[Transfer of a collision]
\label{lem:collision-transfer}
Suppose two distinct points $(t_0,w_+)\ne(t_0,w_-)$ sharing the same
$t$-coordinate satisfy
\[
\nabla\Phi(t_0,w_+)=\nabla\Phi(t_0,w_-),
\qquad
A(w_+)=A(w_-)=A_0 .
\]
(The second condition is automatic: $\partial_t\Phi=A(w)$ is a component
of $\nabla\Phi$, so equal full gradients at $(t_0,w_\pm)$ already force
$A(w_+)=A(w_-)$; we record it separately only to name the common value
$A_0$.) Choosing $\mu=t_0-\lambda A_0$, one has
\[
\nabla\psi_{\lambda,\mu}(w_+)=\nabla\psi_{\lambda,\mu}(w_-).
\]
\end{lemma}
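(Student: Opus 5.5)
The approach is to differentiate \eqref{eq:app-psi} directly and compare the result with the components of $\nabla\Phi$ at the two points. From
\[
\psi_{\lambda,\mu}=B+\tfrac{\lambda}{2}A^{2}+\mu A
\]
the chain rule gives, at every $w$,
\[
\nabla\psi_{\lambda,\mu}(w)=\nabla_wB(w)+\bigl(\mu+\lambda A(w)\bigr)\nabla_wA(w).
\]
Meanwhile $\Phi=tA+B$ gives
\[
\nabla_w\Phi(t,w)=\nabla_wB(w)+t\,\nabla_wA(w),
\]
so $\nabla\psi_{\lambda,\mu}(w)=\nabla_w\Phi\bigl(t^{*}(w),w\bigr)$ with $t^{*}(w)=\mu+\lambda A(w)$. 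This is exactly the critical point from the proof of Lemma~\ref{lem:schur}.

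Next I use the hypotheses at $w_\pm$. Since $A(w_\pm)=A_0$, the choice $\mu=t_0-\lambda A_0$ makes $t^{*}(w_\pm)=t_0$. Therefore
\[
\nabla\psi_{\lambda,\mu}(w_\pm)=\nabla_w\Phi(t_0,w_\pm).
\]
The $w$-block of the assumed equality $\nabla\Phi(t_0,w_+)=\nabla\Phi(t_0,w_-)$ then gives the claim. The $t$-component of that equality is only needed to guarantee $A(w_+)=A(w_-)$, as the statement already notes.

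No step is a real obstacle; the argument is pure bookkeeping. The one point needing care is that the substitution $t=t^{*}(w)$ has to agree with $t_0$ at both points simultaneously. That is exactly where the common $t$-coordinate and the common value $A_0$ are used, and it explains why collisions with different $t$-coordinates (such as the third preimage of $F$) are not transferred. Distinctness of $w_+$ and $w_-$ follows from $(t_0,w_+)\ne(t_0,w_-)$, so the conclusion is a genuine collision of $\nabla\psi_{\lambda,\mu}$.
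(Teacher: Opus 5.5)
Your proof is correct and is essentially the paper's argument: differentiate $\psi_{\lambda,\mu}$ to get $\nabla_w B+(\mu+\lambda A)\nabla_w A$, observe that $\mu=t_0-\lambda A_0$ makes the coefficient equal to $t_0$ at both $w_\pm$, so $\nabla\psi_{\lambda,\mu}(w_\pm)=\nabla_w\Phi(t_0,w_\pm)$, and conclude from the $w$-block of the hypothesis. Your identification of $\mu+\lambda A(w)$ with the critical point $t^{*}$ from the proof of Lemma~\ref{lem:schur} is a helpful gloss, but the argument does not need it.
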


\begin{proof}
At either point, using $\mu+\lambda A=\mu+\lambda A_0=t_0$,
\[
\nabla_w\psi_{\lambda,\mu}
=\nabla_w B+(\mu+\lambda A)\,\nabla_w A
=\nabla_w B+t_0\,\nabla_w A
=\nabla_w\Phi(t_0,w),
\]
and the right-hand sides agree at $w_+$ and $w_-$ by hypothesis.
\end{proof}

Together the two lemmas turn any suitable doubled Keller map into a
constant-Hessian polynomial in one fewer variable with a surviving
gradient collision.

\begin{proposition}[Odd-dimensional Schur-descent bridge]
\label{prop:bridge}
Let $F\colon\kk^{n}\to\kk^{n}$ be a Keller map, affine-linear in the
source coordinate $t=x_n$---that is, each component $F_i$ has total
degree at most $1$ in $x_n$ (no coordinate change is applied)---and let
$\Phi(x,y)=\langle y,F(x)\rangle$ be its doubled potential. Suppose $F$ has a collision over a common last
coordinate:
\[
x'_+\ne x'_-,\qquad F(x'_+,t_0)=F(x'_-,t_0),
\]
where $x'=(x_1,\dots,x_{n-1})$ and $t_0\in\kk$. Then for every
$\lambda\in\kk^{\times}$ the $(2n-1)$-variable polynomial
\[
\psi_{\lambda}(w)=B(w)+\tfrac{\lambda}{2}A(w)^{2}+t_0\,A(w)
\]
satisfies $\det\Hess\psi_{\lambda}=(-1)^{n+1}\lambda(\Jac F)^{2}
\in\kk^{\times}$ and has a gradient collision at the two distinct points
$(x'_\pm,0)$. In particular $\nabla\psi_{\lambda}$ is non-injective and
$\psi_{\lambda}^{L}$ is not a polynomial.
\end{proposition}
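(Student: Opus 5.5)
The plan is to derive the proposition from Lemma~\ref{lem:schur} and Lemma~\ref{lem:collision-transfer}, applied to the doubled potential with pivot $t=x_n$ and block $w=(x',y)\in\kk^{2n-1}$. Since each $F_i$ is affine-linear in $x_n$, write $F=F_0(x')+x_nF_1(x')$. Then $\Phi=x_nA(w)+B(w)$ with $A=\langle y,F_1\rangle$ and $B=\langle y,F_0\rangle$, which is the form \eqref{eq:app-affine-t}. By the block form \eqref{eq:block} and the sign count in Proposition~\ref{prop:HCtoJC}, we have $c=\det\Hess_{t,w}\Phi=(-1)^n(\Jac F)^2\in\kk^{\times}$. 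The only reordering needed is to move $x_n$ to the front, and this is a simultaneous row and column permutation, so it does not change the determinant.

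The step I expect to need the most care is the degeneracy hypothesis $\det M(s,w)\equiv0$ for $M(s,w)=\Hess_w(B+sA)$. The argument is the one already used in Theorem~\ref{thm:psi5}. The function $B+sA=\langle y,F_0+sF_1\rangle$ is linear in $y$, so its $y$-$y$ block vanishes. Hence the $n$ rows of $M$ indexed by $y$ are supported in the $n-1$ columns indexed by $x'$. These $n$ rows lie in an $(n-1)$-dimensional coordinate subspace, so they are linearly dependent, and $\det M\equiv0$ identically in $s$ and $w$. Lemma~\ref{lem:schur} with $\mu=t_0$ then gives $\det\Hess\psi_\lambda=-\lambda c=(-1)^{n+1}\lambda(\Jac F)^2$, which is a nonzero constant.

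For the collision, I would argue directly as in Theorem~\ref{thm:psi5} instead of first lifting a collision of $\nabla\Phi$. We have $\nabla\psi_\lambda=\nabla B+(t_0+\lambda A)\nabla A$. On $\{y=0\}$ the functions $A$, $B$ and their $x'$-derivatives all vanish, because each is linear in $y$. Hence at $(x'_\pm,0)$ the $x'$-block of $\nabla\psi_\lambda$ is $0$. The $y$-block there is $F_0(x'_\pm)+t_0F_1(x'_\pm)=F(x'_\pm,t_0)$. These two values agree by hypothesis.

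Equivalently, one can invoke Lemma~\ref{lem:collision-transfer}. The points $(t_0,x'_\pm,0)$ have equal $\nabla\Phi=\bigl((DF)^Ty,F\bigr)=(0,F(x'_\pm,t_0))$, and $A_0=0$ there, so the lemma's choice $\mu=t_0-\lambda A_0$ reduces to $\mu=t_0$. The points $(x'_\pm,0)$ are distinct because $x'_+\ne x'_-$. Lemma~\ref{lem:noninj} then shows that $\psi_\lambda^L$ is not a polynomial.
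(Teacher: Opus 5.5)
Your proposal is correct and follows the paper's proof essentially step for step. You use the same decomposition $\Phi=x_nA+B$, the same sign count $c=(-1)^n(\Jac F)^2$, the same row-support argument for $\det M(s,w)\equiv0$, and Lemma~\ref{lem:schur} with $\mu=t_0$; for the collision, the paper invokes Lemma~\ref{lem:collision-transfer} with $A_0=0$ (your ``equivalent'' route), while your primary argument is the direct gradient computation already used in Theorem~\ref{thm:psi5}.
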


\begin{proof}
Write $\Phi=tA+B$ as in \eqref{eq:app-affine-t} with
$w=(x_1,\dots,x_{n-1},y_1,\dots,y_n)$; if $F=F_0+tF_1$ then
$A=\langle y,F_1\rangle$ and $B=\langle y,F_0\rangle$. Since $F$ is a
Keller map, the doubling has $\det\Hess_{t,w}\Phi=(-1)^{n}(\Jac F)^{2}
=:c\in\kk^{\times}$ by the block form \eqref{eq:block}
(Proposition~\ref{prop:HCtoJC}). For every $s$ the polynomial $B+sA$ is
linear in the $n$ dual variables $y_1,\dots,y_n$, so in
$M(s,w)=\Hess_w(B+sA)$ the $n$ rows indexed by $y$ are supported entirely
in the $n-1$ columns indexed by $x'$, hence linearly dependent, so
$\det M(s,w)\equiv0$.
Both hypotheses of Lemma~\ref{lem:schur} thus hold, and with $\mu=t_0$ it
gives $\det\Hess\psi_{\lambda}=-\lambda c=(-1)^{n+1}\lambda(\Jac F)^{2}$, a
nonzero constant.

For the collision, take the two points $(x'_\pm,0)$. At $y=0$ one has
$A=\langle0,F_1\rangle=0$ and
$\nabla\Phi(t_0,(x',0))=\bigl(0,\,F(x',t_0)\bigr)$, so the equal-$t$
collision $F(x'_+,t_0)=F(x'_-,t_0)$ gives
$\nabla\Phi(t_0,(x'_+,0))=\nabla\Phi(t_0,(x'_-,0))$ with common value
$A_0=0$. Lemma~\ref{lem:collision-transfer}, with $\mu=t_0$, transfers
this to $\nabla\psi_{\lambda}(x'_+,0)=\nabla\psi_{\lambda}(x'_-,0)$; as
$(x'_+,0)\ne(x'_-,0)$, $\nabla\psi_{\lambda}$ is not injective, and
$\psi_{\lambda}^{L}$ is not a polynomial by Lemma~\ref{lem:noninj}.
\end{proof}

\begin{remark}
Only a collision of $F$ over a common last coordinate is needed---not a
collision of the doubled gradient $\nabla\Phi$, which is produced
automatically at $y=0$. The construction is a genuine two-parameter
family: $\lambda\in\kk^{\times}$ scales the determinant, while
Lemma~\ref{lem:collision-transfer} allows any collision lying over a value
$A=A_0$ to be transported by taking $\mu=t_0-\lambda A_0$ in
$\psi_{\lambda,\mu}=B+\tfrac{\lambda}{2}A^2+\mu A$; the choice $A_0=0$
above is the simplest. When $F$, the points $x'_\pm$, and $t_0,\lambda$
are defined over $\Q$, a scalar multiple of $\psi_\lambda$ has integral
coefficients. Over a general field no such integrality holds, and none is
claimed.
\end{remark}

For $n=3$, Proposition~\ref{prop:bridge} recovers the counterexample of
Theorem~\ref{thm:psi5}: the map $F$ of \eqref{eq:Fmap} is affine-linear
in $t=x_3$ and, by \eqref{eq:collision}, its points $(1,-\tfrac32)$ and
$(-1,\tfrac32)$ collide over $t_0=\tfrac{13}2$, so with $\lambda=1$ and
$\mu=t_0=\tfrac{13}2$ it produces $\psi=B+\tfrac12A^2+\tfrac{13}2A$, whose
double $\Psi=2\psi=A^2+13A+2B$ is the polynomial of that
theorem. So $\Psi$ is neither unique nor canonical: the second
parameter $\lambda$ of the family above only rescales the determinant, so
fixing $\det\Hess=128$ (that is, $\lambda=1$) still leaves $\mu$ free---%
every $A^2+2\mu A+2B$ is a counterexample---and $\mu=\tfrac{13}2$ is
singled out only as the value placing the collision at the trivial $y=0$
points $P_\pm$. It is the
\emph{quadratic} dependence on the dual variables $y_1,y_2,y_3$, through
the term $A^2$, that supplies the nondegeneracy a merely linear doubling
such as \eqref{eq:phi} cannot.

\section{Verification and ancillary files}
\label{app:computations}

The proof of Theorem~\ref{thm:psi5} is complete and hand-checkable; all
its identities have also been verified independently in exact rational
arithmetic. A fully expanded, line-by-line transcript---including the
$3\times3$ reduction of $\Jac F$, the block Hessian, and the six-variable
algebraic Legendre transform of $\varphi$ showing that $\varphi^{L}$ is
not a polynomial---is provided as the ancillary document
\texttt{derivation\_full\_details.pdf}. The identity
$\det\Hess\Psi=128$ is proved symbolically, for the whole
family $A^2+2\mu A+2B$, in \texttt{hc5\_schur\_descent.py} and
\texttt{independent\_check.py}, and for the representative
$\Psi=A^2+13A+2B$ (its $y=0$ collision, degree $14$, and $42$
monomials) in \texttt{coefficient\_analysis.py} and, using no external
CAS, in \texttt{independent\_nosympy.py}. All scripts are fail-closed
(Python~3.8, \textsc{SymPy}~1.12; see \texttt{verification/README.md})
and, with the ancillary document, are available for independent
re-running in the public GitHub repository
\url{https://github.com/malyang/hc5-counterexample}.%
\footnote{The repository URL will be finalized at acceptance and, if a
citable snapshot is desired, an archival release can additionally be
minted a DOI.}


\end{document}